\newtheorem{theorem}{Theorem}[section]
\newtheorem{proposition}[theorem]{Proposition}
\theoremstyle{definition}
\newtheorem{remark}[theorem]{Remark}
\newtheorem{example}[theorem]{Example}
\numberwithin{equation}{section}
\numberwithin{theorem}{section}
\newcommand{\N}{\mathbb{N}}
\newcommand{\Z}{\mathbb{Z}}
\newcommand{\Q}{\mathbb{Q}}
\newcommand{\F}{\mathbb{F}}
\newcommand{\Qbar}{\overline{\Q}}
\newcommand{\ord}{{\rm{ord}}}
\title[A note on cyclotomic polynomials and LFSR]{A note on cyclotomic polynomials and \\ Linear Feedback Shift Registers}
\subjclass[2020]{11R18, 11T55, 94A55} 
\keywords{Linear Feedback Shift Registers, cyclotomic polynomials.} 
\author{Laura Capuano}
\address{DISMA ``Luigi Lagrange'', Politecnico di Torino, Corso Duca degli Abruzzi 24, 10129 Torino, Italy}
\email{laura.capuano@polito.it}
\author{Antonio J. Di Scala}
\address{DISMA ``Luigi Lagrange'', Politecnico di Torino, Corso Duca degli Abruzzi 24, 10129 Torino, Italy}
\email{antonio.discala@polito.it}
\begin{document}

\maketitle
\begin{abstract}
Linear Feedback Shift Registers (LFRS) are tools commonly used in cryptography in many different context, for example as pseudo-random numbers generators. In this paper we characterize LFRS with certain symmetry properties. Related to this question we also classify polynomials $f$ of degree $n$ satisfying the property that if $\alpha$ is a root of $f$ then $f(\alpha^n)=0$. The classification heavily depends on the choice of the fields of coefficients of the polynomial; we consider the cases $K=\F_p$ and $K=\Q$.
\end{abstract} 

\section{Introduction}

The motivation of this paper comes from an exercise in a written exam of Cryptography about the bit stream $(s_j)$ ($j=0,1, \cdots$) generated by a (Fibonacci $n$-bit) Linear Feedback Shift Register (LFSR) see e.g. \cite[page 374]{Schneier15}:

\begin{figure}[h!]
\begin{center}
\includegraphics[height=3cm,width=10cm]{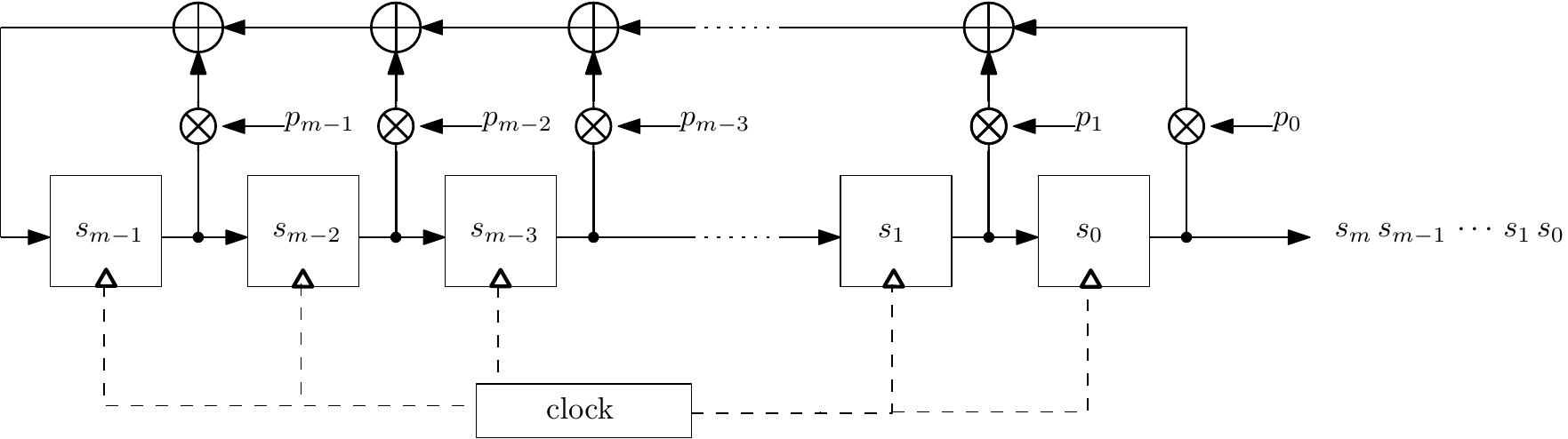}
\end{center}
\caption{a Fibonacci LFSR}
\end{figure}

A linear feedback shift register (LFSR) is a shift register whose input bit is a linear function of its previous state.
The bit stream  
\[ 
\cdots s_n \, s_{n-1} \, \cdots \, s_2 \,  s_1 \,  s_0
\]
is generated recursively as in the Fibonacci sequence; namely, given the initialization state of the register $s_{n-1}, \cdots, s_0 \in \F_2$, for every $j \geq n$
 the bit $s_j$ is computed recursively as
\[ 
s_j = s_{j-1} p_{n-1} + s_{j-2} p_{n-2} + \cdots + s_{j-n} p_0  \, \, (\mathrm{mod} \, \, 2). 
\]
The polynomial $\chi(x) = x^n + p_{n-1}x^{n-1} + \cdots + p_1 x + p_0$ characterizes the LFSR.

Because the operation of the register is deterministic, the stream of values produced by the register is completely determined by its current (or previous) state. Likewise, because the register has a finite number of possible states, it must eventually enter a repeating cycle. However, a LFSR with a well-chosen feedback function can produce a sequence of bits which has a very long cycle and so appears random. Applications of LFSRs include generating pseudo-random numbers, pseudo-noise sequences, fast digital counters, and whitening sequences.

\medskip

By using a row $\mathrm{w}=[s_{n-1} s_{n-2} \cdots s_1 s_0 ]$ to describe the state of the register, we have that 
it changes according to a right multiplication $ \mathrm{w} \cdot \mathrm{L}$, where $\mathrm{L}$ is the $n \times n$ matrix
\begin{equation}\label{LFSRm} 
\mathrm{L} =
\begin{bmatrix}
{ p_{n-1}}  & {1} & 0 & 0 & \cdots & 0 \\
{ p_{n-2}}  & 0 & {1} & 0 & \cdots & 0 \\
{ p_{n-3}}  & 0 & 0 & {1} & \cdots & 0 \\
\vdots  &\vdots & \vdots & \vdots & \ddots & \vdots\\
{ p_1 } & 0 & 0 & 0 & \cdots & {1} \\
{ p_0 } & 0 & 0 & 0 & \cdots & 0 \\
\end{bmatrix}
\end{equation}

Notice that $\chi(x)$ is exactly the characteristic polynomial of $\mathrm{L}$.

\medskip

The above mentioned exercise asked to run the $2$-bit LFSR with polynomial $\chi(x)=x^2 + x + 1$ and to compute the first $6$ bits
$s_5,s_4,s_3,s_2,s_1,s_0$ given $s_1 = 0 , s_0 = 1$.
What grabbed our attention and motivated this paper is that a student constructed the correct bit stream but in a wrong way. Namely, he constructed a bit stream $(r_j)$ by computing \[ [r_j \, r_{j+1} ] = [r_{j-2} \,  r_{j-1}] \cdot  \mathrm{L} \, .\]
It turns out that the bit streams $(r_j)$ and $(s_j)$ are the same because, in this specific case, the matrix associated to the LFSR satisfies
\begin{equation}{\label{2-bit}}
 \mathrm{L} = \tau \cdot \mathrm{L}^2 \cdot \tau,
\end{equation}
where $\tau$ is the $2 \times 2$ reflection matrix $\tau = \begin{bmatrix} 0 & 1 \\ 1 & 0 \end{bmatrix}$. 

 It is then natural to ask, more in general, about the classification of $n$-bit LFSR such that
 \begin{equation}{\label{n-bit}} \mathrm{L} = \tau \cdot \mathrm{L}^n \cdot \tau, \end{equation}
where $\tau = \begin{bmatrix} 0 & \cdots & 1 \\ 0 & \iddots & 0 \\ 1 & \cdots & 0 \end{bmatrix}$ is the $n \times n$ reflection matrix. We will prove the following result:

\begin{theorem}\label{thm:LFSRthm} Let $\mathrm{L}$ be a $n \times n$ matrix with entries in $\F_2$ of the form (\ref{LFSRm}) and let $\chi(x)$ be its characteristic polynomial. Then, $\mathrm{L}$ satisfies equation (\ref{n-bit}) if and only if $\chi(x)= \sum_{j=0}^{n} x^j$.
\end{theorem}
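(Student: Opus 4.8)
To prove Theorem~\ref{thm:LFSRthm} I would argue as follows. Since $\tau^{2}$ is the identity matrix, equation (\ref{n-bit}) is equivalent to $\mathrm{L}^{n}=\tau\,\mathrm{L}\,\tau$, and, multiplying on the right by $\tau$, to the matrix identity $\mathrm{L}^{n}\tau=\tau\mathrm{L}$. The plan is to test this identity on the standard basis $e_{1},\dots,e_{n}$ of $\F_{2}^{n}$. Reading off the columns of (\ref{LFSRm}) one has $\mathrm{L}e_{j}=e_{j-1}$ for $2\le j\le n$ and $\mathrm{L}e_{1}=v$, where $v=(p_{n-1},p_{n-2},\dots,p_{1},p_{0})^{T}$ is the first column of $\mathrm{L}$; in particular $e_{n}$ is a cyclic vector for $\mathrm{L}$, with $\mathrm{L}^{k}e_{n}=e_{n-k}$ for $0\le k\le n-1$, so that $\mathrm{L}^{n}e_{n}=\mathrm{L}e_{1}=v$. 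Setting $w:=\tau v=(p_{0},p_{1},\dots,p_{n-1})^{T}$, the relation $\mathrm{L}^{n}\tau e_{j}=\tau\mathrm{L}e_{j}$ at $j=1$ becomes $\mathrm{L}^{n}e_{n}=w$, i.e.
\begin{equation*}
\text{(B)}\qquad v=w ,
\end{equation*}
while for $2\le j\le n$ it reads $\mathrm{L}^{n}e_{i}=e_{i+1}$ with $i:=n+1-j$ running over $\{1,\dots,n-1\}$. Granting (B) we may rewrite $\mathrm{L}^{n}e_{i}=\mathrm{L}^{n-i}(\mathrm{L}^{n}e_{n})=\mathrm{L}^{n-i}w$, and since $\mathrm{L}e_{k}=e_{k-1}$ for $2\le k\le n$ this whole family of conditions telescopes down to the single one obtained at $i=n-1$, namely
\begin{equation*}
\text{(A)}\qquad \mathrm{L}w=e_{n} .
\end{equation*}
Hence equation (\ref{n-bit}) holds if and only if both (A) and (B) do.

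The remaining step is to translate (A) and (B) into conditions on the coefficients $p_{i}$. Expanding $w=\sum_{i=1}^{n}p_{i-1}e_{i}$ and using $\mathrm{L}e_{1}=v$ gives $\mathrm{L}w=p_{0}v+\sum_{i=1}^{n-1}p_{i}e_{i}$; comparing coordinate by coordinate with $e_{n}$, and recalling that $p_{0}^{2}=p_{0}$ over $\F_{2}$, one finds that (A) is equivalent to $p_{0}=1$ together with $p_{i}=p_{n-i}$ for $1\le i\le n-1$, that is, to $\chi$ being self-reciprocal. On the other hand (B) is the equality of the vectors $v=(p_{n-1},\dots,p_{0})^{T}$ and $w=(p_{0},\dots,p_{n-1})^{T}$, i.e. $p_{j}=p_{n-1-j}$ for $0\le j\le n-1$. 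Combining this last relation with $p_{i}=p_{n-i}$ coming from (A) yields $p_{j}=p_{n-1-j}=p_{j+1}$ for $0\le j\le n-2$, so that all the $p_{i}$ are equal; since $p_{0}=1$ this forces $\chi(x)=\sum_{j=0}^{n}x^{j}$. Conversely, when every $p_{i}=1$ conditions (A) and (B) are immediate, so equation (\ref{n-bit}) holds; this gives the theorem.

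No single step is really hard, and I expect careful bookkeeping to be the main chore; the crux is the reduction in the first paragraph, where exploiting $e_{n}$ as a cyclic vector for $\mathrm{L}$ collapses the full matrix equation (\ref{n-bit}) to the two vector equations (A) and (B)---equivalently, to ``$\chi$ self-reciprocal'' together with ``$(p_{0},\dots,p_{n-1})$ palindromic''---after which the conclusion is forced. One could alternatively note that $\mathrm{L}^{n}\in\F_{2}[\mathrm{L}]$, so (\ref{n-bit}) forces $\tau\mathrm{L}\tau$ to commute with $\mathrm{L}$, which already yields (A); but condition (B) still has to be read off from the equation itself, so the basis-vector computation above looks like the cleanest route.
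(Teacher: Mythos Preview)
Your argument is correct and follows essentially the same route as the paper: both proofs test the identity $\tau\mathrm{L}=\mathrm{L}^{n}\tau$ on columns, and the paper's comparison of the first two columns of $\tau\mathrm{L}$ and $\mathrm{L}^{n}\tau$ is exactly your pair of conditions (B) and (A). The one organizational difference is that you prove the full equivalence $(\ref{n-bit})\Longleftrightarrow\{(\mathrm{A}),(\mathrm{B})\}$ in one stroke---using the cyclic-vector relation $e_{i}=\mathrm{L}^{n-i}e_{n}$ to show that (A) forces all the remaining column identities---whereas the paper extracts (A) and (B) only for the forward direction and then verifies the converse by a separate explicit computation of $\mathrm{L}^{2},\dots,\mathrm{L}^{n}$ when all $p_{i}=1$. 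Your telescoping observation thus replaces that second computation and gives a slightly more unified proof, but the substance is the same.
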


More in general, \eqref{n-bit} implies that the matrices $\mathrm{L}$ and $\mathrm{L}^n$ are similar, and so they have the same eigenvalues. In particular, this implies that for every root $\alpha \in \overline{\F}_2$ of $\chi(x)$, then $\chi(\alpha^n)=0$. This led to the following natural question: given a field $K$, classify all the polynomials $f(x)\in K[x]$ of degree $n$ satisfying the property
\begin{equation} \label{main_property}
\mbox{for every } \alpha \in \overline{K} \mbox{ such that } f(\alpha)=0, \mbox{ then } f(\alpha^n)=0.
\end{equation}
Notice that the polynomial $\chi(x)=\sum_{j=0}^n x^j$, arising from Theorem \ref{thm:LFSRthm}, satisfies this property.

Trivially, every linear polynomial satisfies \eqref{main_property}, so we will always assume without loss of generality that $\deg f \ge 2$.
If $f$ satisfies the property, then for every $i\ge 0$, $\alpha^{n^i}$ is a root of $f$. As $f$ has at most $n$ distinct roots, this implies that there exist $0\le h < k \le n$ such that $\alpha^{n^k}= \alpha^{n^h}$. If we rewrite this equality as $\alpha^{n^h}(\alpha^{{n^h}(n^{k-h}-1)}-1)=0$, then we have that either $\alpha=0$ or $\alpha$ is a root of unity. This property is independent of the nature of the field of coefficients $K$, while the characterization heavily depends on it.

In this paper we will give the full classification in the case  $K=\Q$ (Section \ref{sec:rational}) and for irreducible polynomials in the case $K=\F_p$ (Section \ref{sec:finite_fields}). In both cases, these polynomials are strictly connected to the cyclotomic polynomials, and in particular it turns out their degrees are connected with other classical problems in number theory, e.g. the classification of the numbers which are coprime with their Euler totient function, which arise in several different contexts, as explained in Section \ref{sec:Michon_numbers}.

\subsection*{Acknowledgements}
Both the authors are members of the INdAM group GNSAGA, of CrypTO (the group of Cryptography and Number Theory of Politecnico di Torino), and of DISMA, Dipartimento di Eccellenza MIUR 2018-2022.

\section{Proof of Theorem \ref{thm:LFSRthm}}

\noindent In this section we prove Theorem \ref{thm:LFSRthm}. 

\medskip 

\noindent First, assume that $\mathrm{L} = \tau \cdot \mathrm{L}^n \cdot \tau$; then  $\tau \cdot \mathrm{L} = \mathrm{L}^n \cdot \tau$. 
A direct computation shows that: 
\[ 
\tau \cdot \mathrm{L} =
\begin{bmatrix}
{ p_{0}}     & 0 & * & * & \cdots & * \\
{ p_{1}}    & 0 & * & * & \cdots & * \\
{ p_{2}}    & 0 & * & * & \cdots & * \\
\vdots      &\vdots & \vdots & \vdots & \ddots & \vdots\\
{ p_{n-2} } & 0 & * & * & \cdots & * \\
{ p_{n-1} } & 1 & * & * & \cdots & * \\
\end{bmatrix}, 
\]
and
\[ 
\mathrm{L}^n \cdot \tau =
\begin{bmatrix}
{ p_{n-1}} & p_{n-2} + p_{n-1}^2 & * & * & \cdots & * \\
{ p_{n-2}} & p_{n-3} + p_{n-2} p_{n-1} & * & * & \cdots & * \\
{ p_{n-3}} & p_{n-4} + p_{n-3} p_{n-1}  & * & * & \cdots & * \\
\vdots     &\vdots & \vdots & \vdots & \ddots & \vdots\\
{ p_{1} }  & p_0 + p_1 p_{n-1} & * & * & \cdots & * \\
{ p_{0} }  & p_0 p_{n-1} & * & * & \cdots & * \\
\end{bmatrix},
\]
which holds in any characteristic.
From these formulas, as the coefficients of the characteristic polynomial lie in $\F_2$, we directly get that $p_0=p_1=\cdots=p_{n-1}=1$, i.e. $\chi(x)=\sum_{j=0}^n x^j$.
\medskip

\noindent Conversely, assume that $\chi(x)=\sum_{j=0}^n x^j$ i.e. \[ \mathrm{L} =
\begin{bmatrix}
{ 1}  & {1} & 0 & 0 & \cdots & 0 \\
{ 1}  & 0 & {1} & 0 & \cdots & 0 \\
{ 1}  & 0 & 0 & {1} & \cdots & 0 \\
\vdots  &\vdots & \vdots & \vdots & \ddots & \vdots\\
{ 1 } & 0 & 0 & 0 & \cdots & {1} \\
{ 1} & 0 & 0 & 0 & \cdots & 0 \\
\end{bmatrix} \, .\]
Let us write $\mathrm{L} = [E_0 E_1 E_2 \cdots E_{n-1}]$, where $E_j$ denote the $(j+1)^{\rm{th}}$ column of $\mathrm{L}$. Notice that, for every matrix $\mathrm{M} = [M_1 M_2 \cdots M_n]$, then the following holds: 
\[ \mathrm{M} \cdot \mathrm{L} = \left [ \left ( \sum_j M_j \right ) \, M_1 \, M_2 \, \cdots \, M_{n-1} \right ] \, . \]
Using this property with $M=L$, it follows that
\[\mathrm{L}^2 = [E_n \, E_0 \, E_1 \, \cdots \, E_{n-2}] \]
where $E_n = (\sum_{j=0}^{n-1} E_j)$.
Since we are working in $\F_2$, a simple inductive argument implies that, for every $3 \le k \le n$,
\[
\mathrm{L}^k = [E_{n-k+2} \, E_{n-k+3} \, \cdots \, E_n \, E_0 \, \cdots \, E_{n-k-1}];
\]
in particular
\[ 
 \mathrm{L}^n = [E_2 E_3 \cdots E_{n-1} E_n E_0] \, . 
\]
Now it is straightforward to check that $ \tau \cdot \mathrm{L} = \mathrm{L}^n \cdot \tau$, as wanted. \qed

\begin{remark}
We point out that the statement of Theorem \ref{thm:LFSRthm} heavily depends on the fact that the field of the coefficients has characteristic $2$. More in general, it can be proved in the same way that, if char$(K)\neq 2$, a matrix $L$ of the form \eqref{LFSRm} satisfies $\mathrm{L}=\tau \cdot \mathrm{L}^n \cdot \tau$ if and only if 
\begin{equation*}
\mathrm{L} =
\begin{bmatrix}
{-1}  & {1} & 0 & 0 & \cdots & 0 \\
{-1}  & 0 & {1} & 0 & \cdots & 0 \\
{-1}  & 0 & 0 & {1} & \cdots & 0 \\
\vdots  &\vdots & \vdots & \vdots & \ddots & \vdots\\
{-1} & 0 & 0 & 0 & \cdots & {1} \\
{-1} & 0 & 0 & 0 & \cdots & 0 \\
\end{bmatrix}
\end{equation*}
Notice that, if  char$(K)\neq 2$, then a matrix $\mathrm{L}$ of the form \eqref{LFSRm} has characteristic polynomial equal to $\chi(x)=x^n-p_{n-1}x^{n-1}-\cdots -p_0$. This implies that the previous matrix has characteristic polynomial $\chi(x)=\sum_{i=0}^n x^i$, which is a product of cyclotomic polynomials. 
\end{remark}

\section{Cyclotomic polynomials} \label{sec:cyclotomic}

In this section we recall the definition and some basic properties of cyclotomic polynomials which will be used later. For more references, see \cite{Lang} or \cite{finite_fields}.

Given an integer $n\ge 0$, we let $\Phi_n(x)$ denote the $n^{th}$-cyclotomic polynomial, defined by
\[
\Phi_n(x):=\prod_{\substack{ 1\le k \le n \\ (k,n)=1} }\left (x- e^{\frac{2\pi i k}{n}} \right ).
\]
Clearly, we have that $x^n -1 = \prod_{d \mid n} \Phi_d (x)$ for every $n \ge 1$, and the M\"obius inversion formula gives 
\[
\Phi_n(x)=\prod_{d \mid n} \left ( x^d-1 \right )^{\mu(n/d)}= \prod_{d \mid n} \left ( x^{n/d}-1 \right )^{\mu(d)},
\]
where $\mu(d)$ denotes the M\"obius function, i.e. 
\[
\mu(d):= \begin{cases} 1 \ \ \quad \mbox{if } d \mbox{ is the product of a even number of distinct prime factors;}		\\
0 \ \ \quad \mbox{if } d \mbox{ is not squarefree;}\\
-1 \quad \mbox{if } d \mbox{ is the product of a odd number of distinct prime factors.} 
\end{cases}
\] 
It can be proved that, for every $n \ge 0$, $\Phi_n$ is a monic polynomial with integer coefficients and that $(\Phi_m(x), \Phi_n(x))=1$ for every $m<n$. The degree of $\Phi_n$ is clearly equal to $\varphi(n)$, which denotes the Euler totient function, i.e.  
\[
\varphi(n)= n \prod_{p \mid n} \left ( 1- \frac{1}{p} \right ).
\]

\begin{example}
If $n=p$ is a prime, then $\Phi_p(x)=x^{p-1}+ x^{p-2} + \cdots + 1$.
\end{example}

Let $p$ be a prime and let $\F_q$ denote the finite field with $q=p^f$ elements for some $f\ge 1$. The cyclotomic polynomials are irreducible over the field of rational numbers, but this is not the case over finite fields; indeed, the polynomial $\Phi_n$ is irreducible over $\F_q$ if and only if $n=2, 4, r^k$ or $2r^k$, where $r$ is an odd prime, $k$ is a positive integer and $q$ is a generator of $(\Z/n \Z)^{\times}$.\footnote{In particular this implies that if $(\Z/n\Z)^{\times}$ is not cyclic (i.e. unless $n$ is an odd prime power, twice an odd prime power, or $n=2$ or $4$), then $\Phi_n$ is a polynomial which is reducible modulo every prime $p$ but is irreducible over $\Q$.}

If $(n,q)=1$, then $\Phi_n$ can be factorized into $\varphi(n)/m$ distinct irreducible polynomials of the same degree $m$ over $\F_q$, where $m$ is the multiplicative order of $q$ modulo $n$, i.e. the least positive integer such that $q^m \equiv 1 \mod n$. For the rest of the paper we will denote this order by $\ord_n(q)$.
 If $(n,q)\neq 1$ we can write $n=p^a n'$ with $(p,n')=1$; then, we have that $\Phi_{n}(x)=(\Phi_{n'}(x))^{p^a}$, so it is enough to study the factorization for cyclotomic polynomials of roots of unity of order coprime with the characteristic of the field.

\begin{example} \label{ese:1}
Let us consider the cyclotomic polynomial of order $15$ over $\F_2$; we have that $\varphi(15)=8$ and $\ord_{15}(2)=4$, hence $\Phi_{15}$ factorises into $2$ factors of degree $4$, i.e.
\[
\Phi_{15}(x)=(x^4+x^3+1)(x^4+x+1).
\]
If we take the cyclotomic polynomial of order $8$ over $\F_2$, then we have that
\[
\Phi_8(x)=(x+1)^4.
\]

\end{example}

\section{Classifying the polynomials with rational coefficients satisfying property \eqref{main_property}}

Let $f(x) \in K[x]$ be a nonzero polynomial of degree $n$ with coefficients in a field $K$; we are interested in characterizing the polynomials with the following property:
\begin{equation*} 
\mbox{for every } \alpha \in \overline K \mbox{ such that } f(\alpha)=0, \mbox{ then } f(\alpha^n)=0.
\end{equation*}


We will be interested in two main cases, namely the case $K=\F_p$ and the case $K=\Q$. For every choice of the field $K$, we will first look at the irreducible case and then at the general case. Notice that, eventually dividing by the leading coefficient of $f$, it is enough to consider $f$ monic.

\subsection{The case of irreducible polynomials over $\Q$}

We saw at the end of the introduction that trivially all the polynomials of degree $1$ satisfy the condition, so without loss of generality we can assume $\deg f \ge 2$. In this case, if $f$ is a polynomial of degree $\ge 2$ having this property, then its roots are either $0$ or roots of unity. If $f$ is irreducible and monic, this means that $f(x)$ is the minimal polynomial of a (primitive) root of unity, i.e. a cyclotomic polynomial.

%
%
%

We can prove the following proposition, which gives the characterization of the irreducible polynomials which satisfy \eqref{main_property}.

\begin{proposition} \label{prop:irreducible}
Let $f$ be a monic, irreducible polynomial of degree $n\ge 2$; then, $f$ satisfies \eqref{main_property} if and only if $f(x)=\Phi_d(x)$ for some $d\ge 3$ with $(n,d)=1$.
\end{proposition}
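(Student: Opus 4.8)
The plan is to use the observation, already made in the excerpt, that a monic irreducible polynomial of degree $n \ge 2$ with property \eqref{main_property} must be the minimal polynomial of a root of unity, hence $f(x) = \Phi_d(x)$ for some $d$ with $\varphi(d) = n$; it then remains to determine exactly for which $d$ the cyclotomic polynomial $\Phi_d$ actually satisfies \eqref{main_property}, where $n = \varphi(d)$.

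First I would observe that, since $\Phi_d$ is irreducible over $\Q$ and its roots are exactly the primitive $d$-th roots of unity, property \eqref{main_property} for $f = \Phi_d$ is equivalent to the single condition: if $\zeta$ is a primitive $d$-th root of unity then $\zeta^n$ is also a primitive $d$-th root of unity, i.e. $(n, d) = 1$ where $n = \varphi(d)$. Indeed, $\zeta^n$ is a primitive $d/(n,d)$-th root of unity, and this equals a primitive $d$-th root of unity precisely when $(n,d) = 1$; and if one primitive $d$-th root of unity maps into the roots of $\Phi_d$, all of them do, since the Galois group acts transitively. So the characterization reduces to: $\Phi_d$ satisfies \eqref{main_property} $\iff$ $(\varphi(d), d) = 1$.

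Next I would reconcile this with the stated form of the proposition, which asserts the condition $(n, d) = 1$ with $n = \deg f = \varphi(d)$, together with $d \ge 3$. The restriction $d \ge 3$ is exactly the condition $\deg f = \varphi(d) \ge 2$: for $d = 1$ we get $\Phi_1 = x - 1$ (degree $1$) and for $d = 2$ we get $\Phi_2 = x + 1$ (degree $1$), both excluded by the hypothesis $n \ge 2$; for $d \ge 3$ we have $\varphi(d) \ge 2$. One should also note that when $d \ge 3$, $\varphi(d)$ is even, so the condition $(\varphi(d), d) = 1$ forces $d$ to be odd — this is consistent and worth mentioning, though not strictly needed for the statement. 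Conversely, for every $d \ge 3$ with $(\varphi(d), d) = 1$, setting $n = \varphi(d)$ gives a polynomial $f = \Phi_d$ of degree $n$ satisfying \eqref{main_property} by the computation above.

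I do not expect a serious obstacle here; the proposition is essentially the assembly of two facts proved or recalled earlier in the paper: (i) an irreducible polynomial of degree $\ge 2$ with property \eqref{main_property} is a cyclotomic polynomial $\Phi_d$, and (ii) the Galois-theoretic description of the roots of $\Phi_d$ and the transitivity of the Galois action. The only point requiring a little care is the bookkeeping between the two roles of the number $n$: it is simultaneously the degree of $f$ appearing in the exponent of \eqref{main_property} and the quantity $\varphi(d)$, so the final equivalence is genuinely "$(\varphi(d), d) = 1$" with the degree constraint translating to $d \ge 3$. I would also remark that the polynomial $\chi(x) = \sum_{j=0}^{n} x^j = \Phi_{n+1}(x)$ arising from Theorem \ref{thm:LFSRthm} in the prime case $n+1 = p$ fits this pattern, since $(\varphi(p), p) = (p-1, p) = 1$.
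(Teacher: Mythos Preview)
Your proposal is correct and follows essentially the same approach as the paper's proof: both argue that an irreducible $f$ of degree $\ge 2$ with property \eqref{main_property} must be a cyclotomic polynomial $\Phi_d$, then reduce the question to whether $\alpha^n$ remains a primitive $d$-th root of unity, which happens precisely when $(n,d)=1$, with the constraint $d\ge 3$ coming from $\varphi(d)=n\ge 2$. Your additional remarks (Galois transitivity, the parity observation, the link to $\Phi_{n+1}$) are correct elaborations but not needed for the argument itself.
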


\begin{proof}
If $f$ satisfies the property \eqref{main_property}, then for every root $\alpha$ of $f$, either $\alpha=0$ or $\alpha$ is a root of unity.

As we are assuming $f$ irreducible of degree $\ge 2$, then $f(x)$ is the minimal polynomial of a root of unity, i.e. a cyclotomic polynomial. Moreover, if the order of $\alpha$ is $d$, then $f(x)=\Phi_d(x)$ and $\deg f=\varphi(d)$, so we have to take $d\ge 3$ because we are assuming that $d\ge 2$. As we know that all the roots of $\Phi_d$ are primitive $d^{th}$-roots of unity, then $\alpha^n$ has to be a primitive $d^{th}$ root of unity, which implies that $(n,d)=1$, proving the first implication.
\medskip

We want now to prove the converse. Let us assume then that $f(x)=\Phi_d(x)$ is the $d^{th}$-cyclotomic polynomial with $d\ge 3$ and that the degree of $f$ is coprime with $d$. First, as $d \ge 3$, then $\deg f=\phi(d)$ as wanted. Moreover, if $\alpha$ is a root of $f$, then it is a primitive $d^{th}$-root of unity, and as $(n,d)=1$, we have that $\alpha^n$ is also a primitive $d^{th}$-root of unity, and so $f(\alpha^n)=0$, proving that $f$ satisfies the condition \eqref{main_property}, as wanted.
\end{proof}

\begin{example}\label{ese_sec4} \
\begin{itemize}
  \item If $n=p-1$ where $p$ is a prime, then $\Phi_p(x)=x^{p-1}+ \cdots + 1$ satisfies the condition \eqref{main_property};
  \item Let $\deg f=60$; there are exactly two polynomials of degree $60$ which are cyclotomic polynomials and satisfy the condition \eqref{main_property}, i.e. $\Phi_{61}(x)$ and $\Phi_{77}(x)$ (and $60$ is the smallest degree with this property).
 \end{itemize}
\end{example}

We just proved that the irreducible polynomials which satify the condition \eqref{main_property} are exactly the polynomials of degree $1$ and the cyclotomic polynomials $\Phi_d(x)$ with $(\deg f, d)=1$. In particular, since $\deg \Phi_d(x)=\varphi(d)$, where $\varphi$ is the Euler totient function, we are asking that $(d, \varphi(d))=1$. This also implies that if $\deg f >1$, then it has to be odd and squarefree (see Proposition \ref{prop:cyclic_char}).

The integers $d$ such that $(d, \varphi(d))=1$ have been deeply studied in the literature and appear in many different contexts. We are going to describe some properties of these numbers Section \ref{sec:Michon_numbers}.

\subsection{The general case over $\Q$} \label{sec:rational}

We want now to analyse the general case and characterise the polynomials $f$ (not necessarily irreducible) such that $f$ satisfies the property \eqref{main_property}.

As seen before, if $\alpha$ is a root of $f$ then either $\alpha$ is zero or $\alpha$ is a root of unity, hence either $x \mid f$ or, if $\alpha$ is a primitive $k^{th}$ root of unity, $\Phi_k \mid f$.

We give the following general characterization for the polynomials which satisfy property \eqref{main_property}:

\begin{theorem} \label{thm:general}
Let $f(x)\in \Q[x]$ be a monic polynomial of degree $n\ge 2$. Then, $f$ satisfies the property \eqref{main_property} if and only if
\begin{equation} \label{eq:shape}
f(x)=x^a \prod_{\substack{ 1\le h_1 < \cdots < h_r \le n \\ (n,h_i) = 1}} \Phi_{h_i}(x)^{b_i} \prod_{\substack{ 1\le k_1 < \cdots < k_s \le n \\ (n,k_j) \neq 1}} 
 \Phi_{k_j}(x)^{c_j} \prod_{t=1}^{m_j} \Phi_{\frac{k_j}{(n^t, k_j)}}(x)^{d_{t,j}},
\end{equation}
where $h_i, k_j$ are positive integers, $a,b_i, c_j, d_j $ are non-negative integers, $m_j$ is the biggest integer $\le n$ such that $\frac{k_j}{(n^t, k_j)}$ is not coprime with $n$ and, if $c_j\neq 0$, then also $d_{t,j} \neq 0$ for every $t=1, \ldots, m_j$. Moreover
\begin{equation} \label{eq:degree}
 n= a+\sum_{i=1}^r b_i \varphi(h_i) + \sum_{j=1}^s \left ( c_j\varphi(k_j) + \sum_{t=1}^{m_j} d_{t,j} \varphi \left (\frac{k_j}{(n^{t},k_j)}\right ) \right ).
 \end{equation}
\end{theorem}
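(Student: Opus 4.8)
The plan is to reduce everything to the irreducible building blocks via the factorisation into cyclotomic polynomials, and then to track carefully what property \eqref{main_property} forces on the multiset of orders appearing in that factorisation. First I would record the general shape: since every root of $f$ is either $0$ or a root of unity, we may write
\[
f(x)=x^a\prod_{k\in S}\Phi_k(x)^{e_k}
\]
for a finite set $S$ of positive integers $\ge 2$ (here $\Phi_1=x-1$ is allowed too, but it is cleaner to absorb it into the product and note $(n,1)=1$) and positive exponents $e_k$. The degree relation $n=a+\sum_{k\in S}e_k\varphi(k)$ is automatic; the content of the theorem is the constraint on $S$ and the exponents.

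Next I would unwind what \eqref{main_property} means in this language. If $\alpha$ is a primitive $k$-th root of unity, then $\alpha^n$ is a primitive $\frac{k}{(n,k)}$-th root of unity. Hence $f(\alpha)=0$ forces $f(\alpha^n)=0$, i.e. $\Phi_{k/(n,k)}\mid f$, i.e. $\frac{k}{(n,k)}\in S\cup\{1\}$ (or that root is absorbed by $x^a$ only when $k/(n,k)=1$, which cannot happen for $k\ge 2$ since $(n,k)<k$ unless $k\mid n$... careful: $k/(n,k)=1$ iff $k\mid n$, but then $\alpha^n=1$, which is a root of $f$ only if $\Phi_1\mid f$). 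The key structural observation is then: $S$ must be closed under the map $k\mapsto k/(n,k)$. Now this map, iterated, is eventually stabilising, and its behaviour splits on whether $(n,k)=1$: if $(n,k)=1$ the map fixes $k$ and imposes nothing further; if $(n,k)\neq 1$ then $k/(n,k)<k$ and we keep descending until we reach some $k'$ with $(n,k')=1$. I would make this precise by defining, for each $k$ with $(n,k)\neq1$, the chain $k, k_1=k/(n,k), k_2=k_1/(n,k_1),\dots$; in fact it is cleaner to observe $k_t=k/(n^t,k)$ for all $t$ — this is a small but crucial lemma, proved by induction on $t$ using that $(n^{t+1},k)=(n,k)\cdot(n^t,k/(n,k))$ ... actually the clean statement is $(n^t,k)\cdot(n,\,k/(n^t,k)) = (n^{t+1},k)$, which needs a short gcd computation — I would isolate this. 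With that identity, the orbit of $k$ under iteration is exactly $\{k/(n^t,k): t\ge 0\}$, which stabilises once $k/(n^t,k)$ becomes coprime to $n$; let $m_k$ be the largest $t\le n$ (equivalently, the largest $t$ at all, since it stabilises fast, but we cap at $n$ because $f$ has $\le n$ roots anyway) with $(n, k/(n^t,k))\neq 1$, so the last genuinely new term is $k/(n^{m_k},k)$.

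Then I would assemble the characterisation. Writing $S=S_{\mathrm{cop}}\sqcup S_{\mathrm{ncop}}$ according to whether $(n,k)=1$, the $h_i$ are the elements of $S_{\mathrm{cop}}$ and the $k_j$ are the elements of $S_{\mathrm{ncop}}$; closure under $k\mapsto k/(n,k)$ says precisely that whenever $c_j=e_{k_j}>0$ we must also have all of $k_j/(n^t,k_j)$, $t=1,\dots,m_j$, lying in $S\cup\{1\}$ with positive exponents $d_{t,j}>0$ — this is the condition "if $c_j\neq 0$ then $d_{t,j}\neq 0$." Conversely, any $f$ of the shape \eqref{eq:shape} satisfies \eqref{main_property}: for a root $\alpha$ that is a primitive $h_i$-th root of unity, $\alpha^n$ is again primitive $h_i$-th (coprimality) so still a root; for a root $\alpha$ primitive $k_j$-th, $\alpha^n$ is primitive $\frac{k_j}{(n,k_j)}$-th $=\frac{k_j}{(n^1,k_j)}$-th, which is one of the guaranteed factors (the $t=1$ term), hence a root; and a root $\alpha=0$ maps to $0$, still a root. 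I also have to double-check the boundary cases where $k_j/(n^t,k_j)$ happens to equal $1$ or to coincide with some $h_i$ — in the former case the corresponding root is $1$ and is supplied either by $\Phi_1$ or, if one prefers, this case simply does not arise because $m_j$ is defined as the last index with $(n,\cdot)\neq 1$ so $k_j/(n^{m_j+1},k_j)$ could be $1$ but we never demanded that term; I would state the convention explicitly to avoid an off-by-one. Finally \eqref{eq:degree} is just $\deg f=n$ written out.

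The main obstacle, and the step I would spend the most care on, is the gcd identity $k/(n^t,k) = $ the $t$-fold iterate of $k\mapsto k/(n,k)$, together with the fact that the iteration stabilises exactly at the first $k/(n^t,k)$ coprime to $n$ (so that $m_j$ is well-defined and the product in \eqref{eq:shape} is finite and non-redundant). One has to be a little careful that once $(n, k/(n^t,k))=1$ the sequence is truly constant thereafter — equivalently $(n^{t+1},k)=(n^t,k)$ — and that the chain has no repeats before stabilising, so that the exponents $d_{t,j}$ are unambiguously indexed. A secondary, purely bookkeeping nuisance is matching the "$\le n$" bounds in \eqref{eq:shape}: since $f$ has at most $n$ roots, every order appearing is $\le n$ in the sense that $\varphi$ of it is $\le n$, and the descending chains have length $\le \log_2 n<n$, so capping $m_j$ at $n$ is harmless; I would remark on this rather than belabour it.
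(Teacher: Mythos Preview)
Your proposal is correct and follows essentially the same route as the paper: factor $f$ as $x^a$ times cyclotomic polynomials, observe that property \eqref{main_property} is equivalent to the set $S$ of orders being closed under $k\mapsto k/(n,k)$, identify the descending chains $k_j/(n^t,k_j)$ until coprimality with $n$ is reached, and check the converse directly. The only notable difference is that you plan to derive the identity ``the $t$-fold iterate of $k\mapsto k/(n,k)$ is $k/(n^t,k)$'' as a separate gcd lemma, whereas the paper bypasses this by invoking directly the standard fact that if $\alpha$ has order $k$ then $\alpha^{m}$ has order $k/(m,k)$ (applied with $m=n^t$); your extra care about stabilisation and the boundary case $k\mid n$ is welcome, since the paper is somewhat terse on these points.
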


\begin{remark}
We notice that if $f$ is irreducible, $f$ has only one irreducible factors, so it is equal to some $\Phi_d(x)$ with $d\ge 3$, $n=\varphi(d)$ and $(n,d)=1$, as proved in Proposition \ref{prop:irreducible}.
\end{remark}

\begin{proof}
Assume first that $f$ is a monic polynomial of degree $n\ge 2$ satisfying the property \eqref{main_property}; as its roots are either $\alpha=0$ or $\alpha$ a root of unity then, then the irreducible factors of $f$ are either $x$ or cyclotomic polynomials. Assume that $\alpha \in \Qbar$ is a root of $f$ which is a primitive root of unity and denote by $k$ its order. As $\Phi_k(x)$ is the minimal polynomial of $\alpha$, then $\Phi_k \mid f$. If $(n,k)=1$, then $\alpha^n$ is again a primitive $k$-th root of unity, hence its minimal polynomial is still $\Phi_k$. Assume that $(n,k)\neq 1$ and let $m$ be the biggest integer $\le n$ such that $k/(n^m,k)$ is not coprime with $k$. Then, for every $t=1, \ldots, m$, we have that $\alpha^{n^t}$ is a primitive $k/(n^t,k)$-th root of unity, hence $\Phi_{k/(n^t,k)}$ must divide $f$. On the other hand, if $k/(n^t,k)$ is coprime with $n$, then $(n^t,k)=(n^m,k)$ and so $\alpha^{n^t}$ is a $k/(n^{m},k)$-th primitive root of unity for every $u=m+1, \ldots, n$, hence its minimal polynomial is again $\Phi_{k/(n^{m},k)}$. This implies that $f$ has to be of the form \eqref{eq:shape}. Moreover, the relation \eqref{eq:degree} comes directly by computing the degree of the product, recalling that $\deg \Phi_k(x)=\varphi(k)$. \\

We want to prove the converse; assume that $f$ is a polynomial of the shape \eqref{eq:shape}; then, the degree $n$ of $f$ satisfies \eqref{eq:degree}. Consider now a root $\alpha$ of $f$; then, $\alpha$ is a root of one of the irreducible factors of $f$. From \eqref{eq:shape}, then either $\alpha$ is equal to $0$, or $\alpha$ is a root of a cyclotomic polynomial, i.e. it is a root of unity.

If $\alpha=0$, then $\alpha^n=0$ and so $f(\alpha^n)=0$ as wanted. Assume now that $\alpha$ is a root of unity and denote by $k$ its order. As remarked before, if $\alpha$ has order $k$ then $\alpha^{n^t}$ has order $k/(n^t,k)$ for every $t=1, \ldots, n$. This implies that, if $(n,k)=1$ then $\alpha^{n^t}$ is a primitive $k$-root of unity, so $\Phi_k(\alpha^{n^t})=0=f(\alpha^{n^t})$. Assume now that $(n,k)\neq 1$; then, for every $t$ also $\Phi_{\frac{k}{(n^t,k)}} \mid f$ and so $f(\alpha^{n^t})=\Phi_{\frac{k}{(n^t,k)}}(\alpha^{n^t})=0$, as wanted.
\end{proof}

\begin{example}
For every $n \ge 1$, the polynomial $x^n-1$ satisfies the property \eqref{main_property}. Notice that
$$ x^n-1= \prod_{d \mid n} \Phi_d(x), $$
so $x^n-1$ has exactly the shape \eqref{eq:shape}.
\end{example}

\begin{example}
Let us list all the polynomials of degree $6$ which satisfy the property \eqref{main_property}.
By Theorem \ref{thm:general}, the possible factors of $f$ are either $x$ or cyclotomic polynomials of degree $\le 6$. We use a result of Gupta \cite{Gupta}, which asserts that if $n \in \varphi^{-1}(m)$, then $n < m < A(m)$ with
$$ A(m)= m \prod_{p-1 \mid m} \frac{p}{p-1}. $$
Using this, we have that $A(6)=21$, so we have to check the cyclotomic factors up to $\Phi_{21}$. An easy calculation gives that the cyclotomic polynomials with degrees equal to $1$ are $\Phi_1$ and $\Phi_2$, the ones of degree $2$ are $\Phi_i$ with $i=2,3,6$, the ones of degree $4$ are $\Phi_i$ with $i=5,8,9,12$ and the ones of degree $6$ are $\Phi_i$ with $i=7,14,18$. In order to classify the possible polynomials satisfying \eqref{main_property}, we have to take into account the fact that, if $\Phi_k \mid f$ for some $k$, then also $\Phi_{\frac{k}{(6,k)}} \mid f$.
The polynomials $f$ of degree $6$ that satisfy the property \eqref{main_property} have one of the following shape:
\begin{itemize}
   \item $f(x)=x^a\Phi_1(x)^b$ with $a+b=6$;
   \item $f(x)=x^a\Phi_1(x)^b\Phi_2(x)^c$ with $a+b+c=6$ and $b,c \neq 0$;
   \item $f(x)=x^a\Phi_1(x)^b\Phi_2(x)^c\Phi_3(x)^d$ with $a+b+c+2d=6$ and $b,d \neq 0$;
   \item $f(x)=x^a\Phi_1(x)^b\Phi_2(x)^c\Phi_3(x)^d\Phi_4(x)^{e}$ with $a+b+c+2d+2e=6$ and $b,d,e \neq 0$;
   \item $f(x)=x^a\Phi_1(x)^b\Phi_2(x)^c \Phi_5(x)$ with $a+b+c=2$;
   \item $f(x)=x^a \Phi_1(x)^b\Phi_2(x)^c\Phi_3(x)^d\Phi_4(x)^{e} \Phi_6(x)^f$ with $a+b+c+2(d+e+f)=6$ and $b,f \neq 0$;
   \item $f(x)=\Phi_7(x)$ (which is the only irreducible $f$);
   \item $f(x)=\Phi_1(x) \Phi_2(x) \Phi_{12}(x)$.
\end{itemize}
\end{example}

\section{Numbers coprime with their Euler totient function} \label{sec:Michon_numbers}

The integers $d$ such that $(d, \varphi(d))=1$ have been studied in number theory and appear in many different contexts; for example, these are the numbers such that there is only one group of order $d$ (i.e. the cyclic one). For this reason the numbers which satisfy this property are usually called \textit{cyclic}.


If $n$ is a prime, then $\varphi(n)=n-1$ so $n$ is a cyclic number; this shows that cyclic numbers are infinite. In \cite{Erdos}, Erd\"os gave an asymptotic formula for the number of cyclic numbers.

We prove the following easy necessary condition:
\begin{proposition} \label{prop:cyclic_char}
If $d$ is a cyclic number then either $d=2$ or $d$ is odd and squarefree.
\end{proposition}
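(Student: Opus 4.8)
The plan is to establish the two halves of the conclusion separately: first that a cyclic number must be squarefree, and then that an \emph{even} cyclic number can only be $2$. Both steps rest on the multiplicativity of Euler's function together with the elementary formula $\varphi(p^k)=p^{k-1}(p-1)$ for a prime power, which is already recalled in the excerpt.

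For squarefreeness I would argue by contradiction: suppose $p^2\mid d$ for some prime $p$, and write $d=p^k m$ with $k\ge 2$ and $(p,m)=1$. Then $\varphi(d)=p^{k-1}(p-1)\varphi(m)$, and since $k-1\ge 1$ this is divisible by $p$. Hence $p$ divides both $d$ and $\varphi(d)$, so $(d,\varphi(d))\ge p>1$, contradicting the hypothesis that $d$ is cyclic. Therefore $d$ is squarefree, say $d=p_1\cdots p_r$ with the $p_i$ distinct primes, and $\varphi(d)=\prod_{i=1}^r (p_i-1)$.

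Now suppose in addition that $d$ is even; then $2\in\{p_1,\dots,p_r\}$, say $p_1=2$. If $r\ge 2$, pick any other prime factor $p_2$ of $d$; it is odd, so $2\mid p_2-1$, and hence $2\mid \varphi(d)$. This gives $2\mid (d,\varphi(d))$, again contradicting cyclicity. Thus $r=1$ and $d=2$. Combining the two steps, any cyclic number is either $2$ or odd and squarefree, as claimed.

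I do not expect any genuine obstacle here: the statement is a short exercise in elementary number theory. The only point worth flagging is that the condition is merely necessary and far from sufficient — for instance $21=3\cdot 7$ is odd and squarefree, yet $(21,\varphi(21))=(21,12)=3$, so $21$ is not cyclic — which is consistent with the discussion preceding the proposition.
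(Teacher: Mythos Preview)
Your proof is correct and follows essentially the same approach as the paper's: both establish squarefreeness via $p^2\mid d\Rightarrow p\mid\varphi(d)$ and rule out even $d>2$ via $2\mid\varphi(d)$. The only cosmetic difference is that the paper invokes the blanket fact that $\varphi(d)$ is even for all $d>2$ (handling parity first), whereas you treat squarefreeness first and then deduce the parity constraint from the presence of a second (odd) prime factor; your closing remark on $d=21$ also mirrors the paper's own comment after the proposition.
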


\begin{proof}
We first notice that, if $d> 2$, then $\varphi(d)$ is even, so the only even number $d$ which is cyclic is $d=2$. On the other hand, assume that $p^2 \mid d$; then $p \mid \varphi(d)$, and so $(d, \varphi(d))\neq 1$ which contradicts the property of being cyclic.
\end{proof}

Of course these conditions are not sufficient in general; in fact, if we take for example $d=21$, then $\varphi(21)=12$ which is not coprime with $21$.

We can however prove the following result:

\begin{proposition}
 If $d$ is an odd numbers which is the product of two consecutive prime numbers, then $d$ is cyclic.
\end{proposition}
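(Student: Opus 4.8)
The plan is to write $d = pq$, where $p < q$ are consecutive primes. Since $d$ is odd we must have $p \ge 3$ (if $p = 2$ then $d$ would be even), so both $p$ and $q$ are odd. Because $\varphi$ is multiplicative and $p, q$ are distinct primes, $\varphi(d) = (p-1)(q-1)$, and proving that $d$ is cyclic amounts to showing that $(pq, (p-1)(q-1)) = 1$.

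Since $p$ and $q$ are prime, this gcd equals $1$ if and only if $p \nmid (p-1)(q-1)$ and $q \nmid (p-1)(q-1)$. As $p \nmid p-1$ and $q \nmid q-1$ trivially, it therefore suffices to check the two conditions $p \nmid q-1$ and $q \nmid p-1$. The second one is immediate: $1 \le p - 1 < q$, so $q$ cannot divide $p - 1$.

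For the first condition I would invoke Bertrand's postulate: there is always a prime strictly between $p$ and $2p$, and since $q$ is the smallest prime exceeding $p$, this forces $q < 2p$. Combining this with $q \ge p + 2$ (both primes being odd), we obtain $p < q - 1 < 2p$, so $q - 1$ lies strictly between two consecutive multiples of $p$ and hence is not divisible by $p$. Putting the two divisibility checks together gives $(d, \varphi(d)) = 1$, i.e.\ $d$ is cyclic.

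The only ingredient that is not completely elementary — and the main obstacle if one insists on a self-contained argument — is the appeal to Bertrand's postulate in order to bound the gap $q - p$ between consecutive primes; every other step is a routine divisibility verification using that $p$ and $q$ are prime.
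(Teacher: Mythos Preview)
Your proof is correct and follows essentially the same route as the paper: both arguments reduce the question to showing that the smaller prime $p$ does not divide $q-1$, and both use Bertrand's postulate ($q<2p$) together with the oddness of $p$ to rule this out. Your write-up is in fact a bit more complete, since you spell out the reduction $(pq,(p-1)(q-1))=1 \iff p\nmid q-1 \text{ and } q\nmid p-1$ and dispose of the trivial half $q\nmid p-1$ explicitly, whereas the paper only treats the nontrivial divisibility.
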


\begin{proof}
Assume that $d=p_n p_{n+1}$, where $p_i$ denotes the $i^{th}$ prime number; then we can prove that $p_n \nmid (p_{n+1}-1)$. In fact, assume by contradiction that $p_n \mid (p_{n+1}-1)$; as $p_n \neq 2$, then $p_n \mid \frac{p_{n+1}-1}{2}$, but this is a contradiction since by Bertrand's postulate $p_n < p_{n+1} < 2p_n$.
\end{proof}

Cyclic numbers are also related to Carmichael numbers. We recall that Carmichael numbers \cite{Carmichael} are composite numbers $n$ which satisfies the modular arithmetic congruence relation:
\[
b^{n-1} \equiv 1 \pmod n
\]
for all integers $b$ which are relatively prime to $n$.  Carmichael numbers are also called Fermat pseudoprimes or absolute Fermat pseudoprimes. Indeed, Carmichael numbers pass a Fermat primality test with respect to every base b relatively prime to the number, even though it is not actually prime. This makes tests based on Fermat's Little Theorem less effective than strong probable prime tests such as the Baillie-PSW primality test and the Miller-Rabin primality test. Korselt \cite{Korselt} proved that a positive composite integer $n$ is a Carmichael number if and only if $n$ is square-free, and for all prime divisors $p$ of $n$ then $p-1 \mid n-1$. For Carmichael numbers the following proposition holds:

\begin{proposition}
Every divisor of a Carmichael number is odd and cyclic.
\end{proposition}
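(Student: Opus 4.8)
The plan is to invoke Korselt's criterion, recalled just above the statement: a composite $n$ is a Carmichael number if and only if $n$ is squarefree and $p-1 \mid n-1$ for every prime $p \mid n$. Fix such an $n$ and a divisor $d \mid n$. Since $n$ is squarefree, so is $d$, so it suffices to prove (i) that $d$ is odd and (ii) that $(d,\varphi(d))=1$, i.e. that $d$ is cyclic.

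For (i) I would first show that $n$ itself is odd. If $n$ were even, then $b:=n-1$ is coprime to $n$ and $b\equiv -1 \pmod n$, so the Carmichael congruence $b^{n-1}\equiv 1\pmod n$ forces $(-1)^{n-1}\equiv 1\pmod n$; but $n-1$ is odd, hence $-1\equiv 1\pmod n$, i.e. $n\mid 2$, contradicting that $n$ is composite. (Alternatively this drops straight out of Korselt: an even squarefree composite number is $2m$ with $m$ odd and $m>1$, so it has an odd prime divisor $p$, and then $p-1\mid n-1$ is impossible because $p-1$ is even while $n-1$ is odd.) Since $d\mid n$ and $n$ is odd, $d$ is odd.

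For (ii), write $d=p_1\cdots p_k$ with the $p_i$ distinct odd primes, so that $\varphi(d)=\prod_{i=1}^{k}(p_i-1)$. Suppose for contradiction that $(d,\varphi(d))\neq 1$ and let $q$ be a prime dividing both; then $q=p_j$ for some $j$, and $p_j \mid p_i-1$ for some $i$ (necessarily $i\neq j$, since $p_j\nmid p_j-1$). Now $p_i\mid n$, so Korselt gives $p_i-1\mid n-1$, whence $p_j\mid n-1$. But $p_j\mid d\mid n$ as well, so $p_j\mid (n,\,n-1)=1$, which is absurd. Hence $(d,\varphi(d))=1$, i.e. $d$ is cyclic.

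The argument is short and I do not expect a genuine obstacle: the only point needing care is establishing oddness cleanly (either through the test base $b=n-1$ or through Korselt), and the crux of (ii) is simply the observation that any common prime factor of $d$ and $\varphi(d)$ would, via Korselt's divisibility $p_i-1\mid n-1$, be forced to divide $\gcd(n,n-1)=1$. Everything else is bookkeeping with the prime factorization of $n$.
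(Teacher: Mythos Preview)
Your argument is correct. Note, however, that in the paper this proposition is stated without proof: the authors recall Korselt's criterion and then simply assert the proposition before moving on to Michon's conjecture. Your write-up is exactly the standard derivation from Korselt's criterion that one would expect to fill this gap, and it uses only facts the paper has already recalled (squarefreeness of $n$ and the divisibilities $p-1\mid n-1$), so it is fully in the spirit of the surrounding text.
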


In the '80, Michon conjectured that the converse is also true, i.e. that every odd cyclic number has at least one Carmichael multiple. The conjecture has been verified by Crump and Michon  for all the numbers $< 10000$, but remains still open.

\begin{example}
Here we list the cyclic numbers $1< d < 100$ which are not prime and the relative Euler totient functions:
\begin{tabbing}
marrone \= bla \= bla \= bla \= bla \= bla \= bla \= bla \= bla \= bla \= bla \= bla \= bla \kill
$d$ \>  15   \> 33   \> 35  \> 51   \> 65   \> 69  \> 77  \> 85 \> 87   \> 91  \> 95 \\
$\varphi(d)$ \> 8 \> 20 \> 24 \> 32 \> 48 \> 44 \> 60 \> 64 \> 56  \> 72 \> 72 \\
\end{tabbing}
This shows for example that for the numbers $\le 100$ the only cyclic numbers which have the same Euler totient function are $61$ and $77$ (for which $\varphi(61)= \varphi(77)=60$, see Example \ref{ese_sec4}) and $73, 91$ and $95$ (for which $\varphi(73)= \varphi(91)= \varphi(95)=72$).
\end{example}

\section{The irreducible case over $\F_p$} \label{sec:finite_fields}

The case of finite fields is very different from the previous one. Also in this case, all linear polynomials satisfy property \ref{main_property}, so without loss of generality we will always assume that $\deg f \ge 2$. In this setting, it is always true that, if $\alpha\in \F_q$ for some $q=p^a$, then $\alpha$ has finite order. We will denote by $k$ its order, i.e. the minimal $k$ such that ${\alpha}^k =1$. Notice that $(k,p)=1$. If we consider the cyclotomic polynomial $\Phi_k$,  it is not true anymore that this is the minimal polynomial of $\alpha$ as cyclotomic polynomials are not always irreducible over $\F_p$. Indeed, $\Phi_k$ factorises into $\varphi(k)/\ord_k(p)$ irreducible polynomials of degree $n:=\ord_k(p)$, where $\ord_k(p)$ denotes the multiplicative order of $k$ modulo $p$. This means that, if we consider the extension $\F_p(\alpha)/\F_p$, then it has degree $n$. Moreover, as this is a finite field extension, it is Galois with cyclic Galois group of order $n$, and a generator of the Galois group is the Frobenius, i.e. the automorphism with sends $\alpha \mapsto \alpha^p$. This means that all the conjugates of $\alpha$ are exactly $\alpha, \alpha^p, \ldots \alpha^{p^{n-1}}$. We are ready to prove the following:

\begin{proposition}
Let $f$ be an irreducible polynomial of degree $n\ge 2$; then, $f$ satisfies property \eqref{main_property} if and only if $f$ is a factor of degree $n$ of a cyclotomic polynomial $\Phi_k$ with $n < \varphi(k)$ and $n$ is a power of $p$, or $f=\Phi_k$, ${\ord}_k(p)=\varphi(k)$ and $(k, \varphi(k))=1$.
\end{proposition}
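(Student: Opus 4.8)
The plan is to argue in both directions, translating the condition $f(\alpha^n)=0$ into statements about orders of roots of unity and about the Galois orbit of $\alpha$ under Frobenius. Write $q=p$ throughout; since $f$ is irreducible of degree $n\ge 2$ over $\F_p$, the discussion at the end of the introduction shows every root $\alpha$ of $f$ is a root of unity (it cannot be $0$ since $f$ is irreducible of degree $\ge 2$). Let $k=\ord(\alpha)$, so $(k,p)=1$, and recall from Section \ref{sec:finite_fields} that the minimal polynomial of $\alpha$ is a degree-$n$ irreducible factor of $\Phi_k$, that $n=\ord_k(p)$, and that the full set of conjugates of $\alpha$ is $\{\alpha,\alpha^p,\dots,\alpha^{p^{n-1}}\}$.

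**The forward direction.** Suppose $f$ satisfies \eqref{main_property}. Then $\alpha^n$ is a root of $f$, hence a conjugate of $\alpha$, so $\alpha^n=\alpha^{p^i}$ for some $0\le i\le n-1$; equivalently $n\equiv p^i \pmod k$. The key case distinction is whether $f=\Phi_k$ or $f$ is a proper factor, i.e. whether $n=\varphi(k)$ or $n<\varphi(k)$. If $n<\varphi(k)$: from $n\equiv p^i\pmod k$ I want to deduce $n=p^i$ itself, i.e. that $n$ is a power of $p$. Here I would use the size constraint $1\le p^i \le p^{n-1}$ together with the fact that $n=\ord_k(p)$ and an estimate comparing $n$ with $k$; the point is that $n = \ord_k(p)$ forces $k \mid p^n - 1$, and I must rule out $n\equiv p^i\pmod k$ with $n\ne p^i$. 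I expect this is where a short numerical/size argument is needed — showing that if $n$ is not literally $p^i$ then the congruence $n\equiv p^i\pmod k$ cannot hold because $n$ is too small relative to $k$ (using $k>\varphi(k)\ge n$ fails directly, so one really needs $\varphi(k)>n$ to bound things, or an argument via $k\mid p^n-1$). If instead $n=\varphi(k)$, then $\Phi_k$ is irreducible over $\F_p$, so $\ord_k(p)=\varphi(k)$; and since $\alpha^n=\alpha^{\varphi(k)}$ must again be a primitive $k$-th root of unity (it is a conjugate of $\alpha$, all of which have order $k$), we get $(\varphi(k),k)=1$, i.e. $(n,k)=1$ and $(k,\varphi(k))=1$.

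**The converse direction.** Suppose first $f$ is a degree-$n$ factor of $\Phi_k$ with $n<\varphi(k)$ and $n=p^j$ a power of $p$. Take any root $\alpha$ of $f$; it has order $k$, and $\alpha^n=\alpha^{p^j}$ is the $j$-fold Frobenius image of $\alpha$, hence a conjugate of $\alpha$, hence a root of the minimal polynomial $f$. So $f(\alpha^n)=0$. (One must check $n=\ord_k(p)$ is consistent, but that is automatic since $f\mid\Phi_k$ has degree $n$ forces $\ord_k(p)=n$; and $n=p^j$ need not equal $\ord_k(p)$ a priori — wait, it must, since every degree-$n$ irreducible factor of $\Phi_k$ has degree exactly $\ord_k(p)$, so the hypothesis already encodes $\ord_k(p)=p^j$.) Now suppose instead $f=\Phi_k$ with $\ord_k(p)=\varphi(k)$ and $(k,\varphi(k))=1$. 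Then $n=\varphi(k)$ is coprime to $k$, so for any root $\alpha$ (a primitive $k$-th root of unity), $\alpha^n$ is again a primitive $k$-th root of unity, hence a root of $\Phi_k=f$. Thus $f(\alpha^n)=0$ in both cases.

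**Main obstacle.** The delicate point is the implication, in the forward direction with $n<\varphi(k)$, that $n\equiv p^i\pmod k$ actually forces $n=p^i$ rather than merely $n\equiv p^i$. I would handle it by noting $p^i\le p^{n-1}$ and that any two distinct values $p^i$ ($0\le i\le n-1$) are incongruent mod $k$ (since $\ord_k(p)=n$ means $p^0,\dots,p^{n-1}$ are distinct mod $k$); so there is a unique $i$ with $p^i\equiv n\pmod k$, and I must show this $i$ satisfies $p^i=n$ exactly. The cleanest route is: $\alpha^n$ is a conjugate of $\alpha$, so applying the hypothesis repeatedly, $\alpha^{n^t}$ runs through conjugates of $\alpha$, all of order $k$; in particular $(n,k)=1$ is forced first, and then from $n\equiv p^i\pmod k$ with $0\le i<n$ and $n$ coprime to $k$ one pushes to $n=p^i$ using that $n<k$ would suffice — but $n<k$ may fail, so instead use $n\le\varphi(k)-1<k-1$ via $n<\varphi(k)$ together with $\varphi(k)<k$. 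Then $0<n<k$ and $0<p^i$ with $p^i\equiv n\pmod k$ and $p^i\le p^{n-1}$; if $p^i\ge k$ one subtracts multiples of $k$, and a counting/size argument (there are only $n$ powers of $p$ below $p^n$, spread over residues mod $k$) pins down $p^i=n$. I expect the write-up to phrase this via: $n=p^i$ because both lie in $\{1,\dots,k-1\}$ — no: $p^i$ can exceed $k$. So the real argument must be that $\ord_k(p)=n$ and $p^n\equiv 1$, combined with $p^i\equiv n$, give (raising to suitable powers) constraints forcing $i$ minimal with $p^i\ge n$, and then equality; I would allocate the bulk of the proof's effort here and keep the rest routine.
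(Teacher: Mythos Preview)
Your overall structure mirrors the paper's proof exactly: reduce to $\alpha$ a primitive $k$-th root of unity, split into the cases $n=\varphi(k)$ and $n<\varphi(k)$, and in the latter use that the roots of $f$ are the Frobenius orbit $\{\alpha^{p^i}\}$. Your converse is fine and essentially identical to the paper's. The paper, at the key step in the forward direction with $n<\varphi(k)$, simply writes ``as by property \eqref{main_property} we have that $\alpha^{\ord_k(p)}$ is a root of $f$, this implies that $\ord_k(p)$ must be a power of $p$'' --- i.e.\ it asserts without argument precisely the implication you flagged as the main obstacle.

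Your instinct that this step is delicate is correct; in fact it is \emph{false}, so no size or counting argument will rescue it. Take $p=2$ and $k=41$. Then $2^{10}\equiv -1\pmod{41}$, so $\ord_{41}(2)=20$ and $n=20<\varphi(41)=40$; moreover $2^{9}=512=12\cdot 41+20$, so $20\equiv 2^{9}\pmod{41}$. Hence for any irreducible factor $f\mid \Phi_{41}$ over $\F_2$ (there are two, each of degree $20$) and any root $\alpha$ of $f$, one has $\alpha^{n}=\alpha^{20}=\alpha^{2^{9}}$, which lies in the Frobenius orbit of $\alpha$ and is therefore a root of $f$. Thus $f$ satisfies \eqref{main_property}, yet $n=20$ is not a power of $2$ and $f\neq\Phi_{41}$. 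This contradicts the ``only if'' direction of the Proposition.

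So the difficulty you isolated is not a gap to be filled but a genuine error: the correct condition in the $n<\varphi(k)$ case is that $n$ lie in the cyclic subgroup $\langle p\rangle\subset(\Z/k\Z)^{\times}$ (equivalently $n\equiv p^{i}\pmod{k}$ for some $i$), not that $n$ be a power of $p$ as an integer. Your various attempts to force $n=p^{i}$ from $n\equiv p^{i}\pmod{k}$ via the bound $n<\varphi(k)<k$ cannot succeed, because $p^{i}$ may exceed $k$ while its residue equals $n$, exactly as in the example above.
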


\begin{proof}
First assume that $f$ is an irreducible polynomial satisfying property \eqref{main_property}; then, if $\alpha$ is a root of $f$, either $\alpha$ is zero or $\alpha$ is a root of unity. If $\alpha=0$, then $f(x)=x$, which we exclude as we are assuming $\deg f \ge 2$.  Hence $\alpha$ is a primitive $k$-th root of unity for some $k\in \N$. Notice that $(k,p)=1$ since we are in characteristic $p$ and so $a^p=a$ for every $a\in \overline{\F}_p$. As $f$ is irreducible, we have that $f$ divides the cyclotomic polynomial $\Phi_k$. Now, if $\Phi_k$ is irreducible over $\F_p$ (which, as seen in Section \ref{sec:cyclotomic}, happens if and only if $\ord_k(p)=\varphi(k)$), then $f=\Phi_k$. Moreover, by property \eqref{main_property} we have that $\alpha^{{\varphi(k)}^t}$ must be a primitive $k$th root of unity for every $t \ge 1$, which implies that $(k,\varphi(k))=1$ as wanted.

Assume now that $\ord_k(p)<\varphi(k)$; then, $\Phi_k$ factorises into $\varphi(k)/\ord_k(p)$ irreducible factors of degree $\ord_k(p)$ and $f$ will be equal to one of these factors. This implies that $[\F_p(\alpha):\F_p]=\ord_k(p)$ and, using the Frobenius, all the other roots of $f$ will be $\alpha^p, \ldots, \alpha^{p^{k-1}}$. Therefore, as by property \eqref{main_property} we have that $\alpha^{\ord_k(p)}$ is a root of $f$, this implies that $\ord_k(p)$ must be a power of $p$, as wanted.
\medskip

Let us prove the converse. Assume first that $f=\Phi_k$ with $\ord_k(p)=\varphi(k)$ and $(k, \varphi(k))=1$; then, it is easy to prove that $f$ satisfies property \eqref{main_property} since $\alpha^{\varphi(k)^t}$ is again a primitive $k$-th root of unity as $(k, \varphi(k))=1$. Let us finally consider the case in which $f$ is an irreducible factor of degree $n< \varphi(k)$ and $n$ is a power of $p$; as seen in Section \ref{sec:cyclotomic}, we have that $n=\ord_k(p)$. As by assumption $n$ is a power of $p$, this directly implies that $\alpha^{n^t}$ is a root of $f$ since and all the cojugates of $\alpha$ are exactly $\alpha^p, \ldots \alpha^{p^{n-1}}$, concluding the proof.
\end{proof}

The last proposition gives strong constraints on the type of polynomials which satisfies property \eqref{main_property}; in particular:
\begin{itemize}
\item if $\Phi_k$ is irreducible over $\F_p$, then $(k,p)=1$ and $(k,\varphi(k))=1$;
\item if $\Phi_k$ is not irreducible over $\F_p$ and $(k,p)=1$, then $\ord_k(p)$ must be a power of $p$.
\end{itemize} 
In Section \ref{sec:cyclotomic} we recalled that $\Phi_k$ is irreducible over $\F_p$ if and only if $k=2,4, r^m$ or $2r^m$ with $r$ an odd prime, and the multiplicative order of $p$ modulo $k$ is maximal. By Proposition  \ref{prop:cyclic_char}, we have that if $(k,\varphi(k))=1$ then either $k=2$ or $k$ is odd and squarefree; combining these two conditions we have that $\Phi_k$ is irreducible over $\F_p$ with $(k,\varphi(k))=1$ if and only if either $k=2$ and $p \neq 2$ or $k$ is an odd prime different from $p$ and $\ord_k(p)=k-1$.\\ 

In the second case we have even a more restricted condition; indeed, $\ord_k(p)$ is a divisor of $\varphi(k)$. Let us write $k=\prod_{i=1}^m p_i^{a_i}$ where the $p_i$ are distinct primes and $a_i$ are positive integers; then, by definition  $\varphi(k)=\prod_{i=1}^m p_i^{a_i-1}(p_i-1)$. As we are assuming that $\ord_k(p)$ is equal to some power of $p$, this implies that $p \mid \varphi(k)$, hence $p \mid (p_i-1)$ for some $i$. But it is clear that this can happens if and only if $p=2$.

Using these considerations, we proved the following result:

\begin{theorem}
Let $f$ be an irreducible polynomial over $\F_p$ of degree $n\ge 2$; then, $f$ satisfies property \eqref{main_property} if and only if either $f(x)=x^{r-1}+ \cdots +1$ with $r$ a prime different from $p$ and $p$ a generator of $(\Z/r\Z)^{\times}$ or $p=2$ and $f$ is a factor of degree $n$ of a cyclotomic polynomial $\Phi_k$ with $n < \varphi(k)$, where $k$ is odd and and $n=\ord_k(2)$ is a power of $2$.
\end{theorem}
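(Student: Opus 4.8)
The plan is to read the theorem off from the Proposition proved just above, which already reduces the problem to two alternatives: an irreducible $f\in\F_p[x]$ of degree $n\ge 2$ satisfies \eqref{main_property} if and only if either (a) $f=\Phi_k$ with $\ord_k(p)=\varphi(k)$ and $(k,\varphi(k))=1$, or (b) $f$ is an irreducible factor of some $\Phi_k$ of degree $n=\ord_k(p)<\varphi(k)$ with $n$ a power of $p$. What remains is to translate (a) and (b) into the two explicit families in the statement, using the irreducibility criterion for cyclotomic polynomials over $\F_p$ recalled in Section \ref{sec:cyclotomic} together with the classification of cyclic numbers (Proposition \ref{prop:cyclic_char}). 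Each direction of the ``if and only if'' is then immediate from the corresponding direction in the Proposition plus this dictionary, so I concentrate on setting up the dictionary.

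For alternative (a): since $\Phi_k$ is irreducible over $\F_p$, the criterion of Section \ref{sec:cyclotomic} forces $k\in\{2,4,r^m,2r^m\}$ for an odd prime $r$ and $m\ge 1$; and $(k,\varphi(k))=1$ says $k$ is cyclic, so by Proposition \ref{prop:cyclic_char} either $k=2$ or $k$ is odd and squarefree. Intersecting the two constraints leaves $k=2$ (discarded, since $\Phi_2=x+1$ has degree $1<2$) or $k=r$ an odd prime, with $r\neq p$ because $\ord_k(p)$ has to be defined; then $(r,\varphi(r))=(r,r-1)=1$ holds automatically and $\ord_r(p)=\varphi(r)=r-1$ is exactly the statement that $p$ generates $(\Z/r\Z)^{\times}$. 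Hence $f=\Phi_r(x)=x^{r-1}+\cdots+1$, the first family. Conversely, if $p$ is a primitive root modulo the prime $r$ then $\Phi_r$ is irreducible over $\F_p$ of degree $r-1\ge 2$, and since $(r,r-1)=1$ every iterate $\alpha^{(r-1)^t}$ of a primitive $r$-th root of unity $\alpha$ is again primitive, so $\Phi_r(\alpha^{(r-1)^t})=0$ and \eqref{main_property} holds.

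For alternative (b): write $n=p^e$ with $e\ge 1$ (possible because $n\ge 2$ is a power of $p$), so $p\mid n$; since $n=\ord_k(p)$ divides $\varphi(k)$ we get $p\mid\varphi(k)$. Factoring $k=\prod_i p_i^{a_i}$ with the $p_i$ distinct primes and using $(k,p)=1$ (so $p\neq p_i$ for all $i$), the only way $p$ can divide $\varphi(k)=\prod_i p_i^{a_i-1}(p_i-1)$ is via one of the factors $p_i-1$, so $p\mid p_i-1$ for some $i$. The heart of the matter is to upgrade this to $p=2$: here one must use that it is the order $\ord_k(p)$ itself --- the least common multiple of the local orders of $p$ modulo the $p_i^{a_i}$ --- that is a pure power of $p$, which pins down the $p$-parts of the $p_i-1$ and, together with $n<\varphi(k)$, rules out odd $p$. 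Granting $p=2$, the condition $(k,p)=1$ becomes ``$k$ odd'', and (b) reads exactly ``$f$ is a degree-$n$ factor of $\Phi_k$ with $k$ odd, $n<\varphi(k)$ and $n=\ord_k(2)$ a power of $2$'', the second family. For the converse, the roots of such an $f$ are the Frobenius orbit $\alpha,\alpha^2,\dots,\alpha^{2^{n-1}}$ of a primitive $k$-th root $\alpha$; writing $n=2^e$ with $e<2^e=n$, the power $\alpha^n=\alpha^{2^e}$ is the conjugate obtained by applying the Frobenius $e$ times, hence a root of $f$, and the same computation applied to any root shows \eqref{main_property} holds.

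I expect the main obstacle to be precisely the implication ``$p\mid p_i-1$ for some prime divisor $p_i$ of $k$ $\Longrightarrow$ $p=2$'' in alternative (b): $p\mid p_i-1$ is on its own compatible with odd primes $p$, so the argument has to genuinely exploit the rigidity of the hypothesis that $\ord_k(p)$ --- not merely the order of some unit --- is an exact power of $p$, in combination with $n<\varphi(k)$. Once that is pinned down, the rest is routine bookkeeping with the cyclotomic irreducibility criterion and Proposition \ref{prop:cyclic_char}.
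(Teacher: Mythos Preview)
Your approach is exactly the paper's: reduce to the Proposition's two alternatives (a) and (b), then translate each via the cyclotomic irreducibility criterion from Section~\ref{sec:cyclotomic} together with Proposition~\ref{prop:cyclic_char}. Your treatment of alternative (a) is complete and matches the paper line for line. For alternative (b), the paper's argument is precisely the one you outline up to the point you flag: from $\ord_k(p)=p^e\mid\varphi(k)$ and $(k,p)=1$ one gets $p\mid p_i-1$ for some prime divisor $p_i$ of $k$, and then the paper simply writes ``But it is clear that this can happen if and only if $p=2$.'' No further justification is offered.

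You are right to be suspicious of that step; in fact it is not merely a gap but an error, and the theorem as stated is false for odd $p$. Take $p=3$ and $k=13$. Then $3^3=27\equiv 1\pmod{13}$ while $3,3^2\not\equiv 1\pmod{13}$, so $\ord_{13}(3)=3$, a power of $p=3$, and $n=3<12=\varphi(13)$. Thus $\Phi_{13}$ splits over $\F_3$ into four irreducible cubics. If $f$ is any one of them with root a primitive $13$th root of unity $\alpha$, its roots are the Frobenius orbit $\{\alpha,\alpha^3,\alpha^9\}$, and raising to the $n$th power ($n=3$) permutes this set cyclically; hence $f$ satisfies \eqref{main_property}. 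But $f$ is neither of the form $\Phi_r$ with $r$ prime (no prime $r$ has $r-1=3$) nor covered by the second clause, since $p=3\neq 2$. So the obstacle you singled out cannot be overcome under the stated hypotheses: the Proposition is the correct classification, and the subsequent reduction of alternative (b) to $p=2$ is unwarranted.
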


\begin{example}
Let us consider the cyclotomic polynomial of order $15$ over $\F_2$; as seen before, we have that  $\Phi_{15}$ factorises into $2$ factors of degree $4$, i.e.
\[
\Phi_{15}(x)=(x^4+x^3+1)(x^4+x+1).
\]
In this case both the factors of $\Phi_{15}$ are polynomials of degree $4$ satisfying property \eqref{main_property}. We point out that if $\Phi_n$ factorises into irreducible factors over $\F_2$ then either none of the factors satisfy the property or all do.
\end{example}

\bibliographystyle{amsalpha}

\begin{thebibliography}{}

\bibitem[Car12]{Carmichael}
R.~D. Carmichael, \emph{On {C}omposite {N}umbers {$P$} {W}hich {S}atisfy the
  {F}ermat {C}ongruence {$a^{P-1} \equiv 1 \operatorname{mod} P$}}, Amer. Math.
  Monthly \textbf{19} (1912), no.~2, 22--27.

\bibitem[Ded57]{Dedekind}
R.~Dedekind, \emph{Beweis f\"ur die irreduktibilit\"at der
  kreisteilungsgleichung}, J. reine angew. Math. \textbf{54} (1857), 27--30.

\bibitem[Erd48]{Erdos}
P.~Erd\"os, \emph{Some asymptotic formulas in number theory}, {J. Indian Math.
  Soc. (N.S.)} \textbf{12} (1948), 75--78.

\bibitem[Gup81]{Gupta}
H.~Gupta, \emph{Euler's totient function and its inverse}, {Indian J. pure
  appl. Math.} \textbf{12} (1981), no.~1, 22--29.

\bibitem[Kor99]{Korselt}
A.~R. Korselt, \emph{Probl\`eme chinois}, L'interm\`ediaire des
  math\`ematiciens \textbf{6} (1899), 142--143.

\bibitem[Kro54]{Kronecker}
L.~Kronecker, \emph{M\'emoire sur les facteures irr\'eductibles de
  l'expresssion $x^n-1$}, J. Math. Pures et Appls. \textbf{19} (1854),
  177--192.

\bibitem[Lan29]{Landau}
E.~Landau, \emph{\"uber die irreduktibilit\"at der kreisteilungsgleichung},
  Math. Zeit. \textbf{29} (1929), 462.

\bibitem[Lan02]{Lang}
S.~Lang, \emph{Algebra}, third ed., Graduate Texts in Mathematics, vol. 211,
  Springer-Verlag, New York, 2002.

\bibitem[MP13]{finite_fields}
G.L.~Mullen and D. Panario, \emph{Handbook of finite fields}, Discrete Mathematics and its Applications, CRC Press, 2013.

\bibitem[Sch15]{Schneier15}
Bruce Schneier,
\emph{Applied Cryptography: Protocols, Algorithms and Source Code in C},Wiley; 20th Anniversary edition,2015.

\end{thebibliography}

\end{document}